\numberwithin{equation}{section}
\newtheorem{theor}{Theorem}[section]
\newtheorem{limma}[theor]{Lemma}
\newtheorem{prop}[theor]{Proposition}
\newcounter{other}            
\def \D{\mathbb{D}}
\def \N{\mathbb{N}}
\def \H {\mathcal H}
\def  \bm {\boldsymbol}
\def \H{\mathcal H}
\def \P {\mathcal P}
\begin{document}
\title[The norm of the Hilbert matrix operator   on  Bergman  spaces]
{ The norm of the Hilbert matrix operator   on  Bergman  spaces}

\author{Guanlong Bao, Liu Tian and Hasi Wulan}
\address{Guanlong Bao\\
Department of Mathematics\\
    Shantou University\\
    Shantou, Guangdong 515821, China}
\email{glbao@stu.edu.cn}

\address{Liu Tian\\
Department of Mathematics\\
    Shantou University\\
    Shantou, Guangdong 515821, China}
\email{20ltian@stu.edu.cn}

\address{Hasi Wulan\\
Department of Mathematics\\
    Shantou University\\
    Shantou, Guangdong 515821,  China}
\email{wulan@stu.edu.cn}

\subjclass[2020]{47B38, 47B91,  30H20}
\keywords{Hilbert matrix, Bergman  space, norm}

\begin{abstract}
  Karapetrovi\'c conjectured that the norm of the Hilbert matrix operator  on the Bergman space $A^p_\alpha$
is equal to $\pi/\sin((2+\alpha)\pi/p)$ when $-1<\alpha<p-2$.  In this paper, we provide a proof of this conjecture
for  $0\leq  \alpha \leq \frac{6p^3-29p^2+17p-2+2p\sqrt{6p^2-11p+4}}{(3p-1)^2}$, and this range of $\alpha$  improves the best known result when  $\alpha>\frac{1}{47}$ and $\alpha \not=1$.
\end{abstract}

\maketitle

\section{Introduction}

In 1894, the infinite matrix $\bm {H}=((n+k+1)^{-1})_{n, k\in \N}$ was introduced by Hilbert  \cite{Hil} in  approximation theory. Here   $\N$ is the set of nonnegative integers.
This matrix has been well known as the Hilbert matrix, which has applications across numerous fields, including linear algebra, numerical analysis, and operator theory  (cf. \cite{Ka, M,  SL}).
  The Hilbert matrix is also  a typical example of Hankel matrices whose $n$, $k$ entry is a function of $n+k$ (cf. \cite{Wi}). We  refer to monographs \cite[Chapter 1 and  Chapter 10]{Pe} and  \cite[Chapter 5]{Ha} for the theory associated with the Hilbert matrix.

The Hilbert matrix $\bm {H}$ can be regarded as an operator acting on sequence spaces. More precisely,
given a complex sequence $\{a_n\}_{n\in \N}$, consider
$$
\mathrm{H}:\ \ \{a_n\}_{n\in \N} \mapsto \left\{\sum_{k=0}^\infty \frac{a_k}{n+k+1}\right\}_{n\in \N}.
$$
For $p>1$, it is known  that the operator $\mathrm{H}$ is bounded on the sequence space $\ell^p$ and its operator norm
is equal to $\pi / \sin(\pi/p)$; see for instance \cite[Theorem 286 and Theorem 323]{HLP}.

Let $\text{Hol}(\D)$ be the space of functions analytic in the open unit disk $\D$ of the  complex plane.  For $f(z)=\sum_{n=0}^\infty  a_n z^n$ in $\text{Hol}(\D)$,
by the action of $\bm H$ on the sequence of the  Taylor coefficients of $f$,    the Hilbert matrix operator $\H$  is given  by
\begin{equation}\label{Hdefine}
\H(f)(z)=\sum_{n=0}^\infty \left(\sum_{k=0}^\infty  \frac{a_k}{n+k+1}\right)z^n, \ \ z\in \D,
\end{equation}
whenever the right-hand side of \eqref{Hdefine}  makes sense for every  $z\in \D$ and defines an analytic function in $\D$.

In 2000,  Diamantopoulos and Siskakis \cite{DS} initiated the study of the Hilbert matrix operator  on spaces of  functions analytic in $\D$.
They  proved that  the Hilbert matrix operator $\H$ is bounded  on the Hardy space $H^p$ for $p>1$, and got an upper bound of the corresponding
operator norm. Later,  Dostani\'{c}, Jevti\'{c} and  Vukoti\'{c} \cite{DJV} obtained that this operator  norm on $H^p$ is equal to $\pi / \sin(\pi/p)$, which coincides with the norm of the operator
$\mathrm{H}$ on the sequence space $\ell^p$. \L{}anucha, Nowak and Pavlovi\'c \cite{LNP} investigated the operator  $\H$ on some spaces of analytic functions.
Jevti\'{c} and Karapetrovi\'{c} \cite{JK1} considered  $\H$ on more general Besov spaces. Lindstr\"{o}m, Miihkinen and Wikman \cite{LMW2} and Dai \cite{Da1} studied the norm of $\H$ on Korenblum spaces.
Lindstr\"{o}m, Miihkinen and Norrbo \cite{LMN} got the exact value of the essential norm of $\H$ on some classical analytic function spaces. Ye and Feng \cite{YF} investigated the norm of $\H$ from logarithmically weighted Bergman spaces to the classical Bergman spaces. See \cite{BDS} and \cite{K0} for the surveys  of the Hilbert matrix operator on spaces of analytic functions.

  For $0<p<\infty$ and $\alpha>-1$, recall that the
Bergman space $A^p_\alpha$ consists of functions $f$ in $\text{Hol}(\D)$ such that
$$
\|f\|_{A^p_\alpha}=\left(\int_\D |f(z)|^p dm_\alpha (z)\right)^{\frac 1 p}<\infty,
$$
where $dm_\alpha (z)=(\alpha+1)(1-|z|^2)^{\alpha} dm(z)$. Here and below, $dm$ is the area measure on $\D$ normalized so that $m(\D)=1$; that is,
$$
dm(z)=\frac{1}{\pi} dxdy=\frac{1}{\pi} rdrd\theta, \ \ z=x+iy=re^{i\theta}.
$$
If $\alpha=0$, we write $A^p$ instead of $A^p_\alpha$.
See \cite{HKZ, Z} for  $A^p_\alpha$ spaces.

There are also  a good number of mathematicians who have investigated the boundedness and the  norm of the  Hilbert matrix operator $\H$  on Bergman spaces.
In 2004,   Diamantopoulos \cite{Di} proved that  $\H$ is bounded on $A^p$ if and only if $p>2$,  and estimated the upper bound of this operator norm.
For $p\geq 4$, in 2008  Dostani\'{c}, Jevti\'{c} and  Vukoti\'{c} \cite{DJV} obtained  that the exact value of the  norm of $\H$ on $A^p$  is $\pi/\sin(2\pi/p)$.  For $2<p<4$, in 2018 Bo\u{z}in and  Karapetrovi\'{c} \cite{BK} proved that the  norm of $\H$ on $A^p$  is also $\pi/\sin(2\pi/p)$; see Lindstr\"{o}m, Miihkinen and Wikman \cite{LMW2} for another proof of this result.  For the action of $\H$  on $A^p_\alpha$,   we  recall some  previous results  as follows.
\begin{enumerate}[(i)]
  \item   In 2017, Jevti\'{c} and   Karapetrovi\'{c} \cite{JK} studied the  Hilbert matrix operator on a family of spaces of analytic functions.  In particular, they showed that  $\H$  is bounded on $A^p_\alpha$ if and only if  $1<\alpha+2<p$.
  See Galanopoulos, Girela, Pel\'aez and Siskakis \cite{GGPS} for the study of a generalized Hilbert operator on $A^p_\alpha$  spaces.
  \item  In 2018, Karapetrovi\'{c} \cite{K2} got the lower bound $\pi/\sin((2+\alpha)\pi/p)$ of the norm of  $\H$ on $A^p_\alpha$ for   $1<\alpha+2<p$, and  proved that when $\alpha\geq 0$ and $p\geq 2(\alpha+2)$ this lower bound is the exact value of the  norm of $\H$ on $A^p_\alpha$. In \cite[p. 516]{K2},  Karapetrovi\'{c}  conjectured that  if $1<\alpha+2<p$, then the  norm of $\H$ on $A^p_\alpha$ is $\pi/\sin((2+\alpha)\pi/p)$.
       \item In 2021,  Lindstr\"{o}m, Miihkinen  and Wikman \cite{LMW1} proved Karapetrovi\'{c}'s  conjecture  for $\alpha> 0$ and
$\alpha+2+\sqrt{\alpha^2+\frac{7}{2}\alpha+3}\leq p<2(\alpha+2)$.
  \item In 2021, Karapetrovi\'{c} \cite{K1} proved his conjecture for $\alpha>0$ and $p\geq \alpha_0$, where $\alpha_0$ is a unique zero of the function
  $$\Phi_\alpha(x)=2x^2-(4(\alpha+2)+1)x+2\sqrt{\alpha+2}\sqrt{x}+\alpha+2 $$
  on the interval $(\alpha+2, 2(\alpha+2))$.
 Using this result, he confirmed his conjecture when  $\alpha>0$ and
$$\alpha+2+\sqrt{(\alpha+2)^2-\left(\sqrt{2}-\frac{1}{2}\right)(\alpha+2)}\leq p<2(\alpha+2),$$
which improves the conclusion in (iii).  For $-1<\alpha<0$ and $p>\alpha+2$, Karapetrovi\'{c} \cite{K1}  also gave an  upper bound of the norm of  $\H$ on $A^p_\alpha$, which  was further studied by
Bralovi\'c and Karapetrovi\'{c} \cite{BKa}, and Norrbo \cite{N}.
  \item In 2023, Dmitrovi\'c and Karapetrovi\'{c} \cite{DK} proved Karapetrovi\'{c}'s  conjecture for $\alpha>0$ and $\frac{3\alpha}{4}+2+\sqrt{\left(\frac{3\alpha}{4}+2\right)^2-\frac{\alpha+2}{2}}\leq p$.
 They pointed out that this improves the best previously known result for all $\alpha>\frac{1}{2}$.
  \item In 2024, Dai \cite{Da}  demonstrated Karapetrovi\'{c}'s  conjecture  when $p$ and $\alpha$ satisfy one of the following conditions:
\begin{itemize}
  \item $\alpha>0$ and $2+\frac{3\alpha}{4}+\frac{1}{4}\sqrt{9\alpha^2+40\alpha+48}\leq p<2(\alpha+2)$;
  \item  $\alpha>0$,  $\alpha+2<p<2+\frac{3\alpha}{4}+\frac{1}{4}\sqrt{9\alpha^2+40\alpha+48}$,   and a nontrivial  condition of $p$ and $\alpha$ involving a double integration and the Beta function;
  \item $0<\alpha\leq \frac{1}{47}$ and $p>\alpha+2$;
  \item $\alpha=1$ and $p>3$;
  \item $-1<\alpha<0$ and $p\geq 2(\alpha+2)$.
\end{itemize}
  \end{enumerate}

The value $\frac{3\alpha}{4}+2+\sqrt{\left(\frac{3\alpha}{4}+2\right)^2-\frac{\alpha+2}{2}}$ in (v) is equal to the number  $2+\frac{3\alpha}{4}+\frac{1}{4}\sqrt{9\alpha^2+40\alpha+48}$ in (vi), which was
found  independently by the authors in \cite{DK} and \cite{Da}.

In summary, when $\alpha>0$ and $\alpha+2<p<2(\alpha+2)$, Karapetrovi\'{c}'s  conjecture was not   fully settled. Karapetrovi\'{c}'s  conjecture  is also open for  $-1<\alpha<0$ and $\alpha+2<p<2(\alpha+2)$.

The known results of  Karapetrovi\'{c}'s  conjecture, which were mentioned earlier,  were stated based on  $p(\alpha)$-curves.  Certainly, they can be rewritten   based on  $\alpha(p)$-curves.   In this paper, we give  a   proof of Karapetrovi\'{c}'s  conjecture for
\begin{align}\label{Z::main inequ}
0\leq  \alpha \leq \frac{6p^3-29p^2+17p-2+2p\sqrt{6p^2-11p+4}}{(3p-1)^2},
\end{align}
which is  based on $\alpha(p)$-curves.  Throughout this paper, when  the value $\frac{6p^3-29p^2+17p-2+2p\sqrt{6p^2-11p+4}}{(3p-1)^2}$ appears,  we emphasize that
we always  assume this value is reasonable; that is $6p^2-11p+4\geq 0$ and $p \not = 1/3$. Then we will see that condition \eqref{Z::main inequ} also yields $p>\alpha+2$.
 As recalled before, Karapetrovi\'{c}'s  conjecture for the cases  $0\leq \alpha\leq \frac{1}{47}$ and $\alpha=1$ was  thoroughly
answered in the literature. In Section 4, we will point out that our range of $\alpha$ in \eqref{Z::main inequ} is  the best so far when $\alpha>\frac{1}{47}$ and $\alpha \not=1$.

\section{A  result based on $\alpha(p)$-curves}

In this section, we prove   the following conclusion  which   is stated based on $\alpha(p)$-curves and  will be used to prove our main result.

\begin{prop}\label{1main}
 Let
\begin{align}\label{ABC:456}
&\max \left\{\frac{6p^3-29p^2+17p-2-2p\sqrt{6p^2-11p+4}}{(3p-1)^2}, \ 0 \right\} \nonumber  \\
\leq & \ \alpha \ \leq\frac{6p^3-29p^2+17p-2+2p\sqrt{6p^2-11p+4}}{(3p-1)^2}.
\end{align}
Then
$$
\|\mathcal{H}\|_{A_\alpha^p}=\frac{\pi}{\sin\frac{(\alpha+2)\pi}{p}},
$$
where  $\|\mathcal{H}\|_{A_\alpha^p}$  is the norm of the Hilbert matrix operator $\H$ on $A_\alpha^p$.
 \end{prop}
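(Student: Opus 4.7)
The lower bound $\|\mathcal H\|_{A^p_\alpha}\geq \pi/\sin((\alpha+2)\pi/p)$ is already available from Karapetrovi\'c \cite{K2}, so the task reduces to the matching upper bound on the range \eqref{ABC:456}. I will follow the integral-representation/Minkowski scheme initiated in \cite{BK} and subsequently refined in \cite{LMW1, K1, DK, Da}, with a careful reparametrization that lets the two-sided quadratic bound in \eqref{ABC:456} emerge.

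The starting point is the representation
$$\mathcal H(f)(z)=\int_0^1 \frac{f(t)}{1-tz}\,dt,$$
which converges for every $f\in A^p_\alpha$ under the hypothesis $p>\alpha+2$ (a consequence of \eqref{ABC:456} that I would verify separately). A substitution of the form $t=\phi_s(z)$, together with insertion of the splitting weight
$$w_c(s)=s^{c-1}(1-s)^{-c},\qquad c=\frac{\alpha+2}{p},$$
rewrites the representation as
$$\mathcal H(f)(z)=\int_0^1 w_c(s)\,(W_s f)(z)\,ds,$$
where $\{W_s\}_{s\in(0,1)}$ is a family of weighted composition operators on $A^p_\alpha$. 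The point of choosing $c=(\alpha+2)/p$ is Euler's reflection formula:
$$\int_0^1 w_c(s)\,ds=B(c,1-c)=\frac{\pi}{\sin(c\pi)}=\frac{\pi}{\sin((\alpha+2)\pi/p)}.$$
Applying Minkowski's integral inequality yields
$$\|\mathcal H(f)\|_{A^p_\alpha}\leq \|f\|_{A^p_\alpha}\int_0^1 w_c(s)\,\|W_s\|_{A^p_\alpha\to A^p_\alpha}\,ds,$$
so the proof is complete once $\|W_s\|_{A^p_\alpha\to A^p_\alpha}\leq 1$ is established for every $s\in(0,1)$.

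The main obstacle is exactly this pointwise operator-norm bound, and it is here that the hypothesis \eqref{ABC:456} enters decisively. A change of variables in the Bergman integral and careful bookkeeping of Jacobians reduces $\|W_s\|_{A^p_\alpha\to A^p_\alpha}\leq 1$ to the non-negativity on $[0,1]$ of an explicit one-variable function $G_{p,\alpha}$. After extracting manifestly positive factors and, if necessary, a further substitution such as $s=1-u$ or a power substitution, $G_{p,\alpha}$ should reduce to an expression that is quadratic in $\alpha$ with coefficients depending polynomially on $p$; its non-negativity then amounts to $\alpha$ lying between the two explicit roots appearing in \eqref{ABC:456}, with the associated discriminant collapsing to the clean form $4p^2(6p^2-11p+4)/(3p-1)^4$, which accounts for both the square root and the denominator in \eqref{ABC:456}. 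The maximum with $0$ in the lower bound merely reflects the a priori restriction $\alpha\geq 0$ in the proposition. The remaining steps (integral representation, Minkowski, reduction to $G_{p,\alpha}$) are largely standard; the heart of the argument will be the algebraic identification of $G_{p,\alpha}$ and the sign analysis of its coefficients in $\alpha$, which I expect to be substantial but mechanical.
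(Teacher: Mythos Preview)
Your plan hinges on the pointwise bound $\|W_s\|_{A^p_\alpha\to A^p_\alpha}\leq 1$ for every $s\in(0,1)$, and this is where the argument breaks. With the standard weighted composition operators $T_t$ (so that $W_t=T_t/w_c(t)$), the change of variables gives exactly
\[
\|T_t f\|_{A^p_\alpha}^p = w_c(t)^p\,(\alpha+1)\int_{D_t}(g_t(z))^{\alpha}|f(z)|^p|z|^{p-4}\,dm(z),
\]
and $\|W_t\|\leq 1$ amounts to bounding this integral by $\|f\|_{A^p_\alpha}^p$. Using $g_t(z)\leq(1-|z|^2)/|z|^2$ when $\alpha\ge 0$, this succeeds precisely when the leftover factor $|z|^{p-4-2\alpha}$ is bounded on $\D$, i.e.\ when $p\geq 2(\alpha+2)$---which is Karapetrovi\'c's original result in \cite{K2}. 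For $\alpha+2<p<2(\alpha+2)$, a range that \eqref{ABC:456} certainly covers (e.g.\ $p=4$, $\alpha\approx 0.3$), the factor $|z|^{p-4-2\alpha}$ blows up near the origin and the pointwise inequality genuinely fails for functions concentrated near $0$. No reparametrization in $s$ repairs this, because the obstruction lives in the $z$-integral, not in the $s$-parametrization.

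The paper's proof therefore abandons the pointwise bound and instead shows that the \emph{integrated} deficit is non-positive:
\[
\int_0^1 w_c(t)\Bigl((K_{p,\alpha}(t))^{1/p}-\|f\|_{A^p_\alpha}\Bigr)\,dt\leq 0,
\]
where $K_{p,\alpha}(t)$ is an upper bound for $\|W_t f\|^p$ obtained via the inclusion $D_t\subseteq\{t/(2-t)<|z|<1\}$. The concavity inequality $x^{1/p}-y^{1/p}\leq \tfrac{1}{p}y^{1/p-1}(x-y)$ linearizes the difference, and Fubini swaps the $t$- and $r$-integrations, reducing the whole problem to the one-variable inequality $F_{p,\alpha}(r)\leq 0$ on $(0,1]$ (Lemma~\ref{auxiliary}). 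This $F_{p,\alpha}$ is transcendental in $\alpha$, not quadratic; the quadratic structure appears only after two differentiations, in an auxiliary function $k(r)$ that is quadratic in $r$. Condition \eqref{ABC:456} is exactly the discriminant condition $\Delta_k\leq 0$ for $k$ in the variable $r$, and $\Delta_k$ happens to be quadratic in $\alpha$. So the mechanism producing the two-sided bound sits one level deeper than your outline anticipates, and the averaging step (concavity $+$ Fubini) is not an optional refinement---it is the entire content beyond the known case $p\geq 2(\alpha+2)$.
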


\vspace{0.1truecm}
\noindent {\bf  Remark 1.}\ \
In the last section, we will see that condition \eqref{ABC:456} in Proposition \ref{1main}  can be replaced by the weaker condition \eqref{Z::main inequ}.
Recall that  $\H$  is bounded on $A^p_\alpha$ if and only if  $1<\alpha+2<p$. We remark that condition  \eqref{Z::main inequ}  implies $p>\alpha+2$. In fact,
condition \eqref{Z::main inequ} gives
\begin{align}\label{TL:AAA}
2p\sqrt{6p^2-11p+4}\geq -6p^3+29p^2-17p+2.
\end{align}
If $p\leq 0$, then the left-hand side of \eqref{TL:AAA} is non-positive and the right-hand side of \eqref{TL:AAA} is positive, which is impossible.
 In other words,  condition  \eqref{Z::main inequ}  yields $p>0$. When $p>0$,  a direct computation gives
$$\frac{6p^3-29p^2+17p-2+2p\sqrt{6p^2-11p+4}}{(3p-1)^2}<p-2. $$
Combining this with \eqref{Z::main inequ}, we get $p>\alpha+2$.

Denote by  $\P$ the set of polynomials.  For $1<\alpha+2<p$, recall  that there exists a positive constant $C$ such that
 \begin{align}\label{ZZ::ZZ}
\|\H(g)\|_{A_\alpha^p}\leq C \|g\|_{A_\alpha^p}
\end{align}
for all $g \in \P$.  In fact,   \eqref{ZZ::ZZ} yields that   for any $f(z)=\sum_{n=0}^\infty  a_n z^n$  in $A^p_\alpha$, the right-hand side of \eqref{Hdefine}  makes sense for every  $z\in \D$ and defines an analytic function in $\D$. This can be done similarly to \cite[p. 1069]{Di}. See \cite[Lemma 2.3]{BS} for a similar conclusion about a generalized Hilbert operator on certain Banach spaces  of analytic functions.

  The following conclusion  is from  \cite{K2}, which is a reason for Karapetrovi\'{c} to give his conjecture.
\begin{theor}\cite[Theorem 1.1]{K2} \label{1ower bound}
If $1<\alpha+2 <p$, then
$$
\|\mathcal{H}\|_{A_\alpha^p}\geq\frac{\pi}{\sin\frac{(\alpha+2)\pi}{p}}.
$$
 \end{theor}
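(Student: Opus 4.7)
I would prove the lower bound by exhibiting a one-parameter family of near--extremal test functions for $\mathcal{H}$. The starting point is an integral representation: from the identity $\tfrac{1}{n+k+1}=\int_0^1 t^{n+k}\,dt$ combined with the boundedness inequality \eqref{ZZ::ZZ} (which justifies the sum--integral interchange by first working on polynomials and then passing to limits), one has for every $f\in A^p_\alpha$ with $1<\alpha+2<p$
\[
\mathcal{H}(f)(z)=\int_0^1\frac{f(t)}{1-tz}\,dt,\qquad z\in\D.
\]
The natural candidate family is $f_c(z)=(1-z)^{-c}$ with $0<c<(\alpha+2)/p$; each $f_c$ lies in $A^p_\alpha$ precisely because $pc<\alpha+2$, and the goal is to show that $\|\mathcal{H}(f_c)\|_{A^p_\alpha}/\|f_c\|_{A^p_\alpha}\to \pi/\sin((\alpha+2)\pi/p)$ as $c\nearrow(\alpha+2)/p$.

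The size of $f_c$ is computed by polar integration against the Beta function: $\|f_c\|_{A^p_\alpha}^p$ is comparable to $\bigl((\alpha+2)-pc\bigr)^{-1}$ and hence blows up in the limit, with the dominant contribution coming from a shrinking neighborhood of $z=1$. For the numerator I would evaluate $\mathcal{H}(f_c)$ radially: for $z=r\in(0,1)$ the substitution $t=1-(1-r)s$ turns the representation into
\[
\mathcal{H}(f_c)(r)=(1-r)^{-c}\int_0^{1/(1-r)}\frac{s^{-c}}{1+rs}\,ds,
\]
and as $r\nearrow 1$ the inner integral converges, by the classical identity $\int_0^\infty s^{-c}(1+s)^{-1}\,ds=\pi/\sin(\pi c)$, to $\pi/\sin(\pi c)$. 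Hence $\mathcal{H}(f_c)$ carries the same leading boundary singularity as $\tfrac{\pi}{\sin(\pi c)}f_c$, and $\sin(\pi c)\to\sin((\alpha+2)\pi/p)$ as $c\nearrow(\alpha+2)/p$.

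To convert this pointwise information into a norm statement I would write $\mathcal{H}(f_c)=\tfrac{\pi}{\sin(\pi c)}f_c+E_c$ and establish $\|E_c\|_{A^p_\alpha}=o\bigl(\|f_c\|_{A^p_\alpha}\bigr)$ as $c\nearrow(\alpha+2)/p$. Splitting $\D$ into a shrinking window $\{|1-z|<\delta\}$ and its complement, the contribution from the outside region is bounded uniformly in $c$ while $\|f_c\|_{A^p_\alpha}\to\infty$; on the window the substitution above provides explicit control of the subleading term, namely the difference $\int_0^\infty-\int_0^{1/(1-r)}$. The triangle inequality then yields
\[
\|\mathcal{H}(f_c)\|_{A^p_\alpha}\ \ge\ \tfrac{\pi}{\sin(\pi c)}\|f_c\|_{A^p_\alpha}-\|E_c\|_{A^p_\alpha},
\]
and dividing by $\|f_c\|_{A^p_\alpha}$ and passing to the limit gives the claimed lower bound on $\|\mathcal{H}\|_{A^p_\alpha}$. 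The main obstacle, and the reason the argument is subtler than the computation indicates, is that the Bergman norm is a genuinely two-dimensional integral: the radial asymptotics alone do not suffice, and one must extend the substitution estimate so that $\mathcal{H}(f_c)(z)-\tfrac{\pi}{\sin(\pi c)}(1-z)^{-c}$ is controlled uniformly in a Stolz-type region around $z=1$ rather than only on the positive radius.
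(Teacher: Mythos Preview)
The paper does not prove this theorem at all: it is quoted verbatim from \cite[Theorem 1.1]{K2} and used as a black box. So there is no in-paper argument to compare against. Your outline is in fact the standard route taken in \cite{DJV} (for $\alpha=0$) and in \cite{K2}: test with $f_c(z)=(1-z)^{-c}$, let $c\nearrow(\alpha+2)/p$, and show that the ratio of norms tends to $\pi/\sin((\alpha+2)\pi/p)$.

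Your sketch is correct, but you are making the last step harder than it is. The error $E_c(z)=\mathcal H(f_c)(z)-\tfrac{\pi}{\sin\pi c}(1-z)^{-c}$ is not merely controlled in a Stolz angle; it is \emph{bounded on all of $\D$}, uniformly for $c$ in compact subsets of $(0,1)$. One way to see this is to recognise $\mathcal H(f_c)(z)=\tfrac{1}{1-c}\,{}_2F_1(1,1;2-c;z)$ and apply the Gauss connection formula at $z=1$, which gives
\[
\mathcal H(f_c)(z)=\frac{\pi}{\sin\pi c}\,(1-z)^{-c}z^{c-1}-\frac{1}{c}\,{}_2F_1(1,1;1+c;1-z).
\]
The last term is analytic at $z=1$, while $(1-z)^{-c}(z^{c-1}-1)=O(|1-z|^{1-c})$ there; away from $z=1$ both $\mathcal H(f_c)$ and $(1-z)^{-c}$ are trivially bounded (for $|1-z|\ge\delta$ one has $|1-tz|\ge\delta/2$ for all $t\in(0,1)$, so $|\mathcal H(f_c)(z)|\le 2/(\delta(1-c))$). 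Hence $\sup_{\D}|E_c|\le C$ uniformly in $c$, so $\|E_c\|_{A^p_\alpha}=O(1)$ while $\|f_c\|_{A^p_\alpha}\to\infty$, and the reverse triangle inequality finishes the job without any localisation or Stolz-region analysis.
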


For  $p>\alpha+2>0$, it is known that
 \begin{align}\label{Z1}
 \int_{0}^1\frac{t^{\frac{\alpha+2}{p}-1}}{(1-t)^{\frac{\alpha+2}{p}}}\,dt= B\left(\frac{\alpha+2}{p}, 1-\frac{\alpha+2}{p}\right)=\frac{\pi}{\sin\frac{(\alpha+2)\pi}{p}},
\end{align}
 where $B$ is the Beta function. Then, for $p>\alpha+2>0$, the function
 \begin{align}\label{F1}
F_{p,\alpha}(r)=r^{p-4-\frac{3\alpha}{2}}
\int_0^{\frac{2r}{1+r}}\frac{t^{\frac{\alpha+2}{p}-1}}{(1-t)^{\frac{\alpha+2}{p}}}
\,dt - \int_{0}^1\frac{t^{\frac{\alpha+2}{p}-1}}{(1-t)^{\frac{\alpha+2}{p}}}\,dt
\end{align}
is well defined for  $r\in (0,1]$.

The  following property of the function $F_{p,\alpha}$ is useful to prove Proposition \ref{1main}.

 \begin{limma}\label{auxiliary}
Let $p>\alpha+2>0$  and
\begin{align}\label{A formula}
&\frac{6p^3-29p^2+17p-2-2p\sqrt{6p^2-11p+4}}{(3p-1)^2} \nonumber  \\
\leq & \ \alpha \ \leq\frac{6p^3-29p^2+17p-2+2p\sqrt{6p^2-11p+4}}{(3p-1)^2}.
\end{align}
Then $F_{p,\alpha}(r)\leq0$ for all $r\in(0,1]$.
 \end{limma}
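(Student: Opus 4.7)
The plan is to show that $F_{p,\alpha}$ is strictly increasing on $(0,1)$; since $F_{p,\alpha}(1)=0$ by identity \eqref{Z1}, this will give $F_{p,\alpha}\le 0$ on $(0,1]$. I first substitute $t=2s/(1+s)$ to rewrite the inner integral cleanly, obtaining
$$F_{p,\alpha}(r)=r^{\gamma}g(r)-g(1),\qquad g(r):=\int_0^r\frac{2^{\beta}s^{\beta-1}}{(1-s)^{\beta}(1+s)}\,ds,$$
with $\gamma:=p-4-\tfrac{3\alpha}{2}$ and $\beta:=\tfrac{\alpha+2}{p}\in(0,1)$, and then differentiate to obtain $F_{p,\alpha}'(r)=r^{\gamma-1}Q(r)$ where $Q(r):=\gamma g(r)+rg'(r)$. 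The task reduces to verifying $Q>0$ on $(0,1)$.

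The heart of the proof is the sign analysis of $Q'$. Differentiating once more and using the explicit form of $g'$ (so that $(\log g'(r))'=(\beta-1)/r+\beta/(1-r)-1/(1+r)$), a short computation gives
$$Q'(r)=g'(r)\bigl[\delta+T(r)\bigr],\qquad T(r):=\frac{r[(\beta-1)+(\beta+1)r]}{1-r^2},\qquad \delta:=\gamma+\beta.$$
Solving $T'(r)=0$ reduces to the quadratic $r^2-\tfrac{2(\beta+1)}{1-\beta}r+1=0$, whose root in $(0,1)$ is $r_0=(1-\sqrt{\beta})/(1+\sqrt{\beta})$; a short simplification then yields the minimum value $T(r_0)=-(1-\sqrt{\beta})^2/2$. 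Consequently $Q'\ge 0$ on $(0,1)$ whenever $\delta\ge(1-\sqrt{\beta})^2/2$.

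It remains to connect this to the hypothesis. Multiplying $\delta\ge(1-\sqrt{\beta})^2/2$ by $2p$ and using $p\beta=\alpha+2$ turns it into $L+2\sqrt{p(\alpha+2)}\ge 0$ with $L:=(2p^2-9p+2)+(1-3p)\alpha$; this is in particular implied by the stronger polynomial condition $L^2\le 4p(\alpha+2)$. Expanding and regrouping by powers of $\alpha$ rewrites the latter as
$$(3p-1)^2\alpha^2-2(6p^3-29p^2+17p-2)\alpha+(4p^4-36p^3+89p^2-44p+4)\le 0,$$
whose discriminant in $\alpha$ works out to $16p^2(6p^2-11p+4)$, so its two roots are exactly the bounds in \eqref{A formula}. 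Thus \eqref{A formula} forces $\delta\ge(1-\sqrt{\beta})^2/2>0$, so $Q'\ge 0$ on $(0,1)$ and $Q(r)\sim 2^{\beta}r^{\beta}\delta/\beta>0$ as $r\to 0^+$; monotonicity then yields $Q>0$ throughout $(0,1)$, completing the proof.

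The principal obstacle is pinning down the sharp minimum $T(r_0)=-(1-\sqrt{\beta})^2/2$: without this clean closed form there would be no hope of matching the derivative condition to the polynomial bound in the hypothesis. The subsequent algebraic identification with \eqref{A formula}, though lengthy, is mechanical once the quadratic in $\alpha$ is written down; the only subtlety is that passing from $L\ge -2\sqrt{p(\alpha+2)}$ to $L^2\le 4p(\alpha+2)$ is a strengthening, reflecting that the symmetric hypothesis \eqref{A formula} is slightly stronger than what this line of argument strictly requires.
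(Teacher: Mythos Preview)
Your proof is correct and follows essentially the same strategy as the paper's: differentiate $F_{p,\alpha}$ twice and reduce to the nonnegativity of a quadratic in $r$, then identify the hypothesis \eqref{A formula} with a discriminant condition. After your substitution $t=2s/(1+s)$, your $Q$ coincides with the paper's $g$, and $k(r)=(1-r^2)[\delta+T(r)]$ is precisely the paper's quadratic; your strengthened two-sided condition $(1-\sqrt\beta)^2/2\le\delta\le(1+\sqrt\beta)^2/2$ is exactly $\Delta_k\le 0$, so the two arguments are the same computation in different coordinates---the one genuinely new remark you add is that only the lower bound on $\delta$ is actually needed.
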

\begin{proof}
 For   $r\in(0,1)$,  $F'_{p,\alpha}(r)=r^{p-5-\frac{3\alpha}{2}}g(r)$, where
$$
g(r)=\left(p-4-\frac{3\alpha}{2}\right)\int_0^{\frac{2r}{1+r}}\frac{t^{\frac{\alpha+2}{p}-1}}{(1-t)^{\frac{\alpha+2}{p}}}
\,dt +\frac{(2r)^{\frac{\alpha+2}{p}}}{(1+r) (1-r)^{\frac{\alpha+2}{p}}}.
$$
It is easy to get
\begin{eqnarray*}
g'(r)=\frac{2^{\frac{\alpha+2}{p}}r^{\frac{\alpha+2}{p}-1} }{(1+r)^2 (1-r)^{\frac{\alpha+2}{p}+1}}k(r).
\end{eqnarray*}
Here
\begin{eqnarray*}
k(r)
=\left(\frac{3\alpha }{2}  +5-p\right)r^2+\left(  \frac{\alpha+2}{p} -1\right)r+\frac{\alpha+2}{p}+ p-\frac{3\alpha }{2}-4.
\end{eqnarray*}
Then
\begin{eqnarray*}
k'(r)=\left (3\alpha  +10-2p\right)r + \frac{\alpha+2}{p} -1.
\end{eqnarray*}

Now, suppose   $3\alpha +10-2p=0$. Note that $p>\alpha+2>0$. Then
\begin{eqnarray*}
k(r)=\left(  \frac{\alpha+2}{p} -1\right)r+\left(\frac{\alpha+2}{p}+ p-\frac{3\alpha }{2}-4\right)
\end{eqnarray*}
is a decreasing function on $[0, 1]$ and $k(1)=\frac{2(\alpha+2)}{p}>0$. Hence $k(r)>0$ for $r\in [0, 1]$.

On the other hand, let $3\alpha  +10-2p\neq0$. Then $k$ is  a quadratic function. Denote   by $\Delta_k$ the discriminant for $k(r)=0$. Then
$$
\Delta_k=\left(  \frac{\alpha+2}{p} -1\right)^2-4\left(\frac{3\alpha }{2}  +5-p\right)\left(\frac{\alpha+2}{p}+ p-\frac{3\alpha }{2}-4\right).
$$
Since     $6p^2-11p+4\geq 0$ and  $p\not= \frac{1}{3}$,  condition \eqref{A formula} holds if and only if
$$
(3p-1)^2\alpha^2+(-12p^3+58p^2-34p+4)\alpha+(4p^4-36p^3+89p^2-44p+4)\leq0,
$$
which is equivalent to    $\Delta_k \leq 0$.  Combining this with $k(1)=\frac{2(\alpha+2)}{p}>0$, we get  $k(r)\geq 0$ for $r\in [0, 1]$.

Thus, $k(r)$ is always nonnegative on $[0, 1]$. This gives  $g'(r)\geq 0$ for $r\in (0,1)$. Then $g$ is increasing on $(0,1)$. Bear in mind that
$g(0)=0$ and $g$ is right-continuous at 0. Hence $g(r)\geq 0$ when  $r\in (0,1)$.  Consequently, $F'_{p,\alpha}(r)\geq 0$ for  $r\in (0,1)$.
So, $F_{p,\alpha}$ is  increasing in $(0,1)$. Also, $F_{p,\alpha}$ is left-continuous at 1 and $F_{p,\alpha}(1)=0$. Hence
 $F_{p,\alpha}(r)\leq0$ for all $r\in(0,1]$.
\end{proof}

Let $1<\alpha+2<p$.  Recall that the Hilbert matrix operator  $\mathcal{H}$ on $A^p_\alpha$ has  an integral representation by  weighted composition
 operators $T_t$ (cf. \cite{Di, K2}):
$$
\mathcal{H}(f)(z)=\int_0^1 T_t(f)(z)\,dt,\ \ f\in A^p_\alpha,
$$
where $T_t(f)(z)=\omega_t(z)f(\phi_t(z))$ and
$$
\omega_t(z)=\frac{1}{1-(1-t)z},  \ \  \phi_t(z)=\frac{t}{1-(1-t)z}.
$$

Next we give the proof of Proposition  \ref{1main}.

\begin{proof}[Proof of Proposition  \ref{1main}]
Let $f\in A^p_\alpha$. By Remark 1, $p>\alpha+2\geq 2$. Using  the Minkowski inequality, it is known  from \cite[pp. 520-521]{K2}  that
\begin{align}\label{D1}
\|\mathcal{H}(f)\|_{A_\alpha^p}\leq\int_0^1\|T_t(f)\|_{A_\alpha^p}\,dt,
\end{align}
and
$$
\|T_t(f)\|_{A_\alpha^p}= \frac{t^{\frac{\alpha+2}{p}-1}}{(1-t)^{\frac{\alpha+2}{p}}}\left((\alpha+1)\int_{D_t}(g_t(z))^\alpha|f(z)|^p|z|^{p-4}\,dm(z)\right)^{\frac{1}{p}}.
$$
Here
$$
g_t(z)=\frac{2\mathrm{Re}z-(2-t)|z|^2-t}{(1-t)|z|^2}>0, \ \ z\in D_t,
$$
and $D_t=\mathbb{D}\left(\frac{1}{2-t},\frac{1-t}{2-t}\right)$ is the Euclidean disk  of radius $(1-t)/(2-t)$ and center $1/(2-t)$.
Note that
$$
g_t(z)\leq\frac{2|z|-(2-t)|z|^2-t}{(1-t)|z|^2}=\frac{1}{|z|^2}\left(1-|z|^2-\frac{(1-|z| )^2}{1-t}\right).
$$
Because of  $\alpha\geq0$, we get
$$
(g_t(z))^\alpha\leq\frac{1}{|z|^{2\alpha}}\left(1-|z|^2-\frac{(1-|z| )^2}{1-t}\right)^\alpha.
$$
Consequently,
\begin{align}
&||T_t(f)||_{A_\alpha^p} \leq \frac{t^{\frac{\alpha+2}{p}-1}}{(1-t)^{\frac{\alpha+2}{p}}} \times \nonumber \\
&\small{\left((\alpha+1)\int_{D_t}\left(1-|z|^2-\frac{(1-|z|)^2}{1-t}\right)^\alpha|f(z)|^p|z|^{p-4-2\alpha}\,dm(z)\right)^{\frac{1}{p}}.} \label{C1}
\end{align}
Clearly,
\begin{align}\label{A:::BC}
D_t\subseteq\left\{z\in\mathbb{D}:\frac{t}{2-t}<|z|<1\right\}.
\end{align}
Write
$$
K_{p,\alpha}(t)=2(\alpha+1)
\int_{\frac{t}{2-t}}^1\left(1-r^2-\frac{(1-r)^2}{1-t}\right)^\alpha r^{p-3-2\alpha}M_p^p(r,f)\,dr,
$$
where
$$
M_p(r,f)=\left(\frac{1}{2\pi} \int_0^{2\pi}|f(re^{i\theta})|^p\  d\theta\right)^{\frac{1}{p}}.
$$
Then it follows from  \eqref{C1} that
$$
\|T_t(f)\|_{A_\alpha^p}\leq \frac{t^{\frac{\alpha+2}{p}-1}}{(1-t)^{\frac{\alpha+2}{p}}}(K_{p,\alpha}(t))^{\frac{1}{p}}.
$$
This and \eqref{D1} yield
\begin{align}\label{C2}
\|\mathcal{H}(f)\|_{A_\alpha^p}\leq\int_0^1 \frac{t^{\frac{\alpha+2}{p}-1}}{(1-t)^{\frac{\alpha+2}{p}}}(K_{p,\alpha}(t))^{\frac{1}{p}}\,dt.
\end{align}

Next,  because $p>\alpha+2\geq 2$,  we consider
\begin{align}\label{C6}
&\int_0^1 \frac{t^{\frac{\alpha+2}{p}-1}}{(1-t)^{\frac{\alpha+2}{p}}}\left((K_{p,\alpha}(t))^{\frac{1}{p}}-\|f\|_{A_\alpha^p}\right) \,dt \nonumber \\
=& \int_0^1 \frac{t^{\frac{\alpha+2}{p}-1}}{(1-t)^{\frac{\alpha+2}{p}}} \left((K_{p,\alpha}(t))^{\frac{1}{p}}-\left(J_{p,\alpha}\right)^{\frac{1}{p}}\right) \,dt,
\end{align}
where
$$
J_{p,\alpha}=2(\alpha+1)\int_{0}^1M_p^p(r,f)(1-r^2)^\alpha r\,dr.
$$
Recall that the following elementary inequality (cf. \cite[p. 39]{HLP})
$$
x^\beta-y^\beta\leq\beta y^{\beta-1}(x-y)
$$
holds when  $x>0$, $y>0$ and $\beta\in(0,1)$. Then \eqref{C6} gives
\begin{align}\label{X1}
&\int_0^1 \frac{t^{\frac{\alpha+2}{p}-1}}{(1-t)^{\frac{\alpha+2}{p}}}\left((K_{p,\alpha}(t))^{\frac{1}{p}}-\|f\|_{A_\alpha^p}\right) \,dt \nonumber \\
\leq & \ \frac{1}{p} \left(J_{p,\alpha}\right)^{\frac{1}{p}-1} \int_0^1 \frac{t^{\frac{\alpha+2}{p}-1}}{(1-t)^{\frac{\alpha+2}{p}}} \left(K_{p,\alpha}(t)- J_{p,\alpha} \right)     \,dt.
\end{align}
By  $\alpha\geq0$ and  $(1+r)^{\alpha}\geq 2^\alpha r^{\alpha/2}$, we obtain
\begin{align*}
& K_{p,\alpha}(t)- J_{p,\alpha} \nonumber \\
\leq & \ 2(\alpha+1)\int_{\frac{t}{2-t}}^1\left(1-r^2-(1-r)^2\right)^\alpha r^{p-3-2\alpha}M_p^p(r,f)\,dr \nonumber \\
&- 2^{\alpha+1}(\alpha+1)\int_{0}^1M_p^p(r,f)(1-r)^\alpha r^{\frac{\alpha}{2}+1}\,dr \nonumber \\
= & \ 2^{\alpha+1}(\alpha+1)\int_{\frac{t}{2-t}}^1\left(1-r\right)^\alpha r^{p-3-\alpha}M_p^p(r,f)\,dr \nonumber \\
&- 2^{\alpha+1}(\alpha+1)\int_{0}^1M_p^p(r,f)(1-r)^\alpha r^{\frac{\alpha}{2}+1}\,dr.
\end{align*}
Thus,
\begin{align*}
&\int_0^1 \frac{t^{\frac{\alpha+2}{p}-1}}{(1-t)^{\frac{\alpha+2}{p}}} \left(K_{p,\alpha}(t)- J_{p,\alpha} \right)     \,dt \nonumber \\
\leq &\  2^{\alpha+1}(\alpha+1)\int_0^1 \frac{t^{\frac{\alpha+2}{p}-1}}{(1-t)^{\frac{\alpha+2}{p}}}\times \nonumber \\
&\left(\int_{\frac{t}{2-t}}^1\left(1-r\right)^\alpha r^{p-3-\alpha}M_p^p(r,f)\,dr-\int_{0}^1M_p^p(r,f)(1-r)^\alpha r^{\frac{\alpha}{2}+1}\,dr\right) \,dt.
\end{align*}
This together with the Fubini theorem gives
\begin{align*}
&\int_0^1 \frac{t^{\frac{\alpha+2}{p}-1}}{(1-t)^{\frac{\alpha+2}{p}}} \left(K_{p,\alpha}(t)- J_{p,\alpha} \right)     \,dt \nonumber \\
\leq& \ 2^{\alpha+1}(\alpha+1) \int_0^1  M_p^p(r,f) (1-r)^\alpha r^{\frac{\alpha}{2}+1}  F_{p,\alpha}(r) \, dr,
\end{align*}
where $F_{p,\alpha}(r)$ is defined as \eqref{F1}. From
Lemma \ref{auxiliary}, we get
\begin{align}\label{X2}
\int_0^1 \frac{t^{\frac{\alpha+2}{p}-1}}{(1-t)^{\frac{\alpha+2}{p}}} \left(K_{p,\alpha}(t)- J_{p,\alpha} \right)     \,dt \leq 0.
\end{align}
Thus, \eqref{Z1},    \eqref{X1},  and  \eqref{X2} yield
\begin{align*}
 \int_0^1 \frac{t^{\frac{\alpha+2}{p}-1}}{(1-t)^{\frac{\alpha+2}{p}}}(K_{p,\alpha}(t))^{\frac{1}{p}}\,dt
\leq \frac{\pi}{\sin\frac{(\alpha+2)\pi}{p}} \|f\|_{A_\alpha^p}.
\end{align*}
Combining this with \eqref{C2}, we obtain
\begin{align*}
\|\mathcal{H}(f)\|_{A_\alpha^p}\leq\frac{\pi}{\sin\frac{(\alpha+2)\pi}{p}} \|f\|_{A_\alpha^p}
\end{align*}
for all $f\in A_\alpha^p$, which gives
$$
\|\mathcal{H}\|_{A_\alpha^p}\leq \frac{\pi}{\sin\frac{(\alpha+2)\pi}{p}}.
$$
By this and Theorem \ref{1ower bound}, we get the desired result. The proof of Proposition   \ref{1main} is complete.
\end{proof}
\vspace{0.1truecm}
\noindent {\bf  Remark 2.}\ \  The proof of Proposition  \ref{1main} follows closely  that   of Theorem 3.2 in \cite{Da}. For the completeness, we provide all details here.
 But  we use the  more natural estimate \eqref{A:::BC} which  is different from that in \cite{Da}. This  yields  two  new functions $K_{p,\alpha}$ and  $F_{p,\alpha}$ in the proof of
 Proposition  \ref{1main}, and a new estimate of $F_{p,\alpha}$ in  Lemma \ref{auxiliary}. Hence, we get new ranges of parameters $p$ and $\alpha$,  for which  Karapetrovi\'{c}'s  conjecture holds.

\section{A result based on a  $p(\alpha)$-curve}

From the $\alpha(p)$-curve
$$\alpha=\frac{6p^3-29p^2+17p-2+2p\sqrt{6p^2-11p+4}}{(3p-1)^2}$$  in Proposition  \ref{1main}, it is not easy to get the corresponding $p(\alpha)$-curve by solving  the equation above.
In this section, applying the proof of Proposition  \ref{1main}, we  obtain  Proposition  \ref{2main} which is stated based on  a $p(\alpha)$-curve.   Proposition  \ref{2main} improves
the best previously known result in some sense, and can also be  used in the next section.

\begin{prop}\label{2main}
 Let $\alpha\geq0$ and $p\geq\frac{9+3\alpha +\sqrt{9\alpha^2+30\alpha+33} }{4}$. Then
$$
\|\mathcal{H}\|_{A_\alpha^p}=\frac{\pi}{\sin\frac{(\alpha+2)\pi}{p}}.
$$
 \end{prop}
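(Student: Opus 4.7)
The strategy is to deduce Proposition~\ref{2main} from Proposition~\ref{1main} by verifying that its hypothesis implies condition \eqref{ABC:456}. Since $\alpha \geq 0$ automatically dominates the lower bound in \eqref{ABC:456}, only the upper bound $\alpha \leq \frac{6p^3 - 29p^2 + 17p - 2 + 2p\sqrt{6p^2 - 11p + 4}}{(3p-1)^2}$ remains to be shown.

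First, I would recast the hypothesis $p \geq \frac{9 + 3\alpha + \sqrt{9\alpha^2 + 30\alpha + 33}}{4}$ in polynomial form. Since it forces $4p - 9 - 3\alpha \geq 0$, squaring the equivalent inequality $4p - 9 - 3\alpha \geq \sqrt{9\alpha^2 + 30\alpha + 33}$ and simplifying give the equivalent statement $2p^2 - 3(3+\alpha)p + 3(2+\alpha) \geq 0$, or (using $p > 1$), $\alpha \leq \frac{2p^2 - 9p + 6}{3(p-1)}$. A parallel squaring argument yields $p > \alpha + 2$. Moreover, the hypothesis forces $p \geq \frac{9 + \sqrt{33}}{4} > 3$, ensuring $6p^2 - 11p + 4 > 0$ and $p \neq 1/3$ as required by Proposition~\ref{1main}.

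The heart of the proof is then to establish
\[ \frac{2p^2 - 9p + 6}{3(p-1)} \ \leq \ \frac{6p^3 - 29p^2 + 17p - 2 + 2p\sqrt{6p^2 - 11p + 4}}{(3p-1)^2}, \]
so that the desired upper bound on $\alpha$ follows. All relevant quantities are non-negative in the range dictated by the hypothesis, so cross-multiplying by $3(p-1)(3p-1)^2$ and isolating the radical reduce the claim to $2(3p^2 - 7p + 3) \leq 3(p-1)\sqrt{6p^2 - 11p + 4}$. Squaring (both sides being positive for $p>3$) transforms this into $9(p-1)^2(6p^2 - 11p + 4) - 4(3p^2 - 7p + 3)^2 \geq 0$. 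The key algebraic step---and the main obstacle---is to spot the clean identity
\[ 9(p-1)^2(6p^2 - 11p + 4) - 4(3p^2 - 7p + 3)^2 = p(2p-3)(3p-1)^2, \]
which is manifestly non-negative for $p \geq 3/2$. The reappearance of the factor $(3p-1)^2$ (matching the denominator in the $\alpha(p)$-curve of Proposition~\ref{1main}) is the pleasant surprise that closes the argument; once it is in hand, Proposition~\ref{1main} applies directly to give $\|\mathcal{H}\|_{A_\alpha^p} = \pi/\sin((\alpha+2)\pi/p)$.
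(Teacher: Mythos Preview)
Your proposal has a genuine gap: you assert that ``$\alpha \geq 0$ automatically dominates the lower bound in \eqref{ABC:456}'', but the lower bound there is $\max\{\alpha_{\mathrm{low}}(p),\,0\}$ with
\[
\alpha_{\mathrm{low}}(p)=\frac{6p^3-29p^2+17p-2-2p\sqrt{6p^2-11p+4}}{(3p-1)^2},
\]
and this quantity can be strictly positive. For instance, at $p=20$ one computes $\alpha_{\mathrm{low}}(20)\approx 10.02$; the pair $\alpha=9$, $p=20$ satisfies the hypothesis of Proposition~\ref{2main} (indeed $\frac{2p^2-9p+6}{3(p-1)}\approx 10.98\geq 9$) yet fails condition~\eqref{ABC:456}. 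The paper flags exactly this in Remark~3: Propositions~\ref{1main} and~\ref{2main} do not imply one another, so Proposition~\ref{2main} cannot be deduced from Proposition~\ref{1main}.

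Your algebra is in fact correct---the inequality $\frac{2p^2-9p+6}{3(p-1)}\leq \alpha_{\mathrm{up}}(p)$ does hold, and the identity $9(p-1)^2(6p^2-11p+4)-4(3p^2-7p+3)^2=p(2p-3)(3p-1)^2$ is valid---but it establishes the wrong inequality. What the paper actually uses later (in proving Theorem~\ref{corr}) is the companion bound $\frac{2p^2-9p+6}{3(p-1)}>\alpha_{\mathrm{low}}(p)$, so that Proposition~\ref{2main} fills the gap $0\leq\alpha<\alpha_{\mathrm{low}}(p)$ left open by Proposition~\ref{1main}, not the other way around. Accordingly the paper proves Proposition~\ref{2main} independently, by revisiting the quadratic $k(r)$ from Lemma~\ref{auxiliary}: under the present hypothesis one shows $k(r)\geq 0$ on $[0,1]$ via a case split on the sign of $k'(1)$, which again yields $F_{p,\alpha}\leq 0$ and hence the norm identity through the machinery of Proposition~\ref{1main}'s proof.
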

\begin{proof}
From  the proof of Proposition  \ref{1main}, it suffices to prove $F_{p,\alpha}(r)\leq0$ for all $r\in(0,1]$ when $\alpha\geq0$ and
$$ p\geq\frac{9+3\alpha +\sqrt{9\alpha^2+30\alpha+33} }{4}.$$
Clearly, the  assumptions of this proposition   imply   $p>\alpha+2$.

Use the same symbols as in  the proof of  Lemma \ref{auxiliary}. Recall that
\begin{eqnarray*}
k(r)
=\left(\frac{3\alpha }{2}  +5-p\right)r^2+\left(  \frac{\alpha+2}{p} -1\right)r+\frac{\alpha+2}{p}+ p-\frac{3\alpha }{2}-4,
\end{eqnarray*}
and
\begin{eqnarray*}
k'(r)=\left (3\alpha  +10-2p\right)r + \frac{\alpha+2}{p} -1.
\end{eqnarray*}

When   $3\alpha +10-2p=0$,  we have proven that  $k(r)>0$ for $r\in [0, 1]$ in Lemma \ref{auxiliary}. Now let $3\alpha  +10-2p\neq0$.
Then $k$ is  a quadratic function.

 {\sl {Case 1}}.   Consider
 \begin{align}\label{YY1}
 p\geq \frac{9+3\alpha +\sqrt{9\alpha^2+62\alpha+97}}{4}.
 \end{align}
Note that    $p>0$ and $\alpha\geq 0$. Then \eqref{YY1}   is equivalent to $k'(1)\leq0$. Also, $k'(0)< 0$. Thus, $k'(r)\leq0$ for $r\in [0, 1]$,
which means that $k$ is a decreasing function  in $[0,1]$. Note that $k(1)=\frac{2(\alpha+2)}{p}>0$.  Hence $k(r)>0$ for $r\in [0, 1]$.

{\sl {Case 2}}.  Let
\begin{align}\label{YY2}
\frac{9+3\alpha +\sqrt{9\alpha^2+30\alpha+33} }{4}  \leq p<\frac{9+3\alpha +\sqrt{9\alpha^2+62\alpha+97} }{4}.
\end{align}
Bear also  in mind  that    $p>\alpha+2$ and $\alpha\geq 0$.  With these ranges of $p$ and $\alpha$,  \eqref{YY2} holds if and only if
\begin{align}\label{YY3}
\begin{cases}
\frac{\alpha+2}{p}+ \frac{2p}{3}-\alpha-3\geq0, \\
k'(1)=9+3\alpha+\frac{\alpha+2}{p}-2 p>0.
\end{cases}
\end{align}
The  assumptions of this proposition  also  gives
$
p\geq \frac {3\alpha}{2}+3;
$
that is,
$$
\frac{\alpha+2}{p}+ p-\frac{3\alpha }{2}-4 \geq \frac{\alpha+2}{p}+\frac{2p}{3}-\alpha-3.
$$
Hence  the first inequality   in \eqref{YY3}  implies
\begin{align}\label{YY4}
k(0)=\frac{\alpha+2}{p}+ p-\frac{3\alpha }{2}-4\geq0.
\end{align}
By \eqref{YY3}, \eqref{YY4}, and $p>\alpha+2$,  we see that
\begin{eqnarray*}
\Delta_k
&=&\left(  \frac{\alpha+2}{p} -1\right)^2-2\left(3\alpha +10-2p\right)\left(\frac{\alpha+2}{p}+ p-\frac{3\alpha }{2}-4\right)
\\
&\leq&\left(  \frac{\alpha+2}{p} -1\right)^2+2\left(\frac{\alpha+2}{p}-1\right)\left(\frac{\alpha+2}{p}+ p-\frac{3\alpha }{2}-4\right)
\\
&=&\left(  \frac{\alpha+2}{p} -1\right)\left(\frac{3(\alpha+2)}{p}+ 2p-3\alpha -9\right)\leq 0.
\end{eqnarray*}
From this and  $k(1)=\frac{2(\alpha+2)}{p}>0$, one gets   $k(r)\geq 0$ for $r\in [0, 1]$.

Thus, with the assumptions, we always have  $k(r)\geq 0$ for $r\in [0, 1]$. Next, following the proof of
Lemma \ref{auxiliary}, we obtain $F_{p,\alpha}(r)\leq0$ for all $r\in(0,1]$. The proof  is complete.
\end{proof}

 We finish this section by   some  remarks.

\vspace{0.1truecm}
\noindent {\bf  Remark 3.}\ \
Proposition  \ref{1main} and Proposition  \ref{2main} cannot imply each other. For instance,   $\alpha=9$ and $p=17$ satisfy conditions in Proposition  \ref{1main}, yet they fail to
satisfy conditions  in Proposition \ref{2main}. On the other hand,  $\alpha=9$ and $p=20$ meet the assumptions  of Proposition  \ref{2main}, but they do not satisfy those  in Proposition \ref{1main}.

\vspace{0.1truecm}
\noindent {\bf  Remark 4.}\ \  As stated  in Section 1,  there have been  several results proving Karapetrovi\'{c}'s  conjecture when $\alpha>0$ and $p\geq \alpha_1$, where $\alpha_1$ is a  number in $(\alpha+2, 2(\alpha+2))$. Next we show  that for all $\alpha\geq\frac{-1+\sqrt{13}}{6}$ and $\alpha \not=1$, the value $\frac{9+3\alpha +\sqrt{9\alpha^2+30\alpha+33} }{4}$ in  Proposition \ref{2main}  improves the best previously known result.

For $\alpha>0$,  it is easy  to see that
\begin{align*}
\frac{9+3\alpha +\sqrt{9\alpha^2+30\alpha+33} }{4} &<\ 2+\frac{3\alpha}{4}+\frac{1}{4}\sqrt{9\alpha^2+40\alpha+48}  \\
&=\  \frac{3\alpha}{4}+2+\sqrt{\left(\frac{3\alpha}{4}+2\right)^2-\frac{\alpha+2}{2}}. \nonumber
\end{align*}
Recall that for  $0<\alpha\leq \frac{1}{47}$ and $\alpha=1$, Karapetrovi\'{c}'s  conjecture was proved in \cite{Da}.
Consequently, for $\alpha>\frac{1}{47}$ and $\alpha\not =1$,    Proposition \ref{2main} improves the corresponding results in  (v) and (vi) stated in Section 1.

Now we compare Proposition  \ref{2main} with the corresponding conclusion in  (iv) mentioned in  Section 1.
Let $\alpha>0$. By  \cite[p. 5918]{K1},
 the  unique zero $\alpha_0$ of the function $\Phi_\alpha$ on $(\alpha+2, 2(\alpha+2))$ satisfies
\begin{align*}
&\alpha+2+\sqrt{(\alpha+2)^2-(\alpha+2)}\\
<& \ \alpha_0 <\ \alpha+2+\sqrt{(\alpha+2)^2-\left(\sqrt{2}-\frac{1}{2}\right)(\alpha+2)}.
 \end{align*}
Note that
\begin{align}\label{GG-1}
 \frac{9+3\alpha +\sqrt{9\alpha^2+30\alpha+33} }{4}\leq \alpha+2+\sqrt{(\alpha+2)^2-(\alpha+2)}
 \end{align}
 if and only if
\begin{align}\label{GG-2}
 \sqrt{9\alpha^2+30\alpha+33}\leq \alpha-1+4\sqrt{\alpha^2+3\alpha+2}.
\end{align}
 If $\alpha \geq 1$, it is clear that \eqref{GG-2} holds, so does  \eqref{GG-1}. If $0<\alpha<1$, then \eqref{GG-2} is equivalent to
 $$ 0<1-\alpha+\sqrt{9\alpha^2+30\alpha+33}\leq 4\sqrt{\alpha^2+3\alpha+2}; $$
that is,
$$
3\alpha^3+7\alpha^2+ \alpha-2=3(\alpha+2)\left(\alpha-\frac{-1+\sqrt{13}}{6}\right)\left(\alpha-\frac{-1-\sqrt{13}}{6}\right)\geq 0.
$$
Thus,   if $\alpha\geq \frac{-1+\sqrt{13}}{6}$, then \eqref{GG-1} holds and hence the value $\frac{9+3\alpha +\sqrt{9\alpha^2+30\alpha+33} }{4}$ in  Proposition  \ref{2main} is smaller than
the   zero $\alpha_0$ of the function $\Phi_\alpha$.

In conclusion, for all $\alpha\geq \frac{-1+\sqrt{13}}{6}$ and $\alpha \not=1$, the value $\frac{9+3\alpha +\sqrt{9\alpha^2+30\alpha+33} }{4}$ in  Proposition   \ref{2main} improves the best previously known result.

\vspace{0.1truecm}
\noindent {\bf  Remark 5.}\ \ From Theorem 3.2 in \cite{Da} and Corollary  1.3 in \cite{K2},  Karapetrovi\'{c}'s  conjecture holds if   $\alpha>0$ and  either of the following conditions holds:
\begin{itemize}
  \item
  \begin{align}\label{Z-D0}
  p \geq 2+\frac{3\alpha}{4}+\frac{1}{4}\sqrt{9\alpha^2+40\alpha+48};
  \end{align}
  \item $\alpha+2<p<2+\frac{3\alpha}{4}+\frac{1}{4}\sqrt{9\alpha^2+40\alpha+48}$
and
\begin{align}\label{Z-D}
&\int_0^{1}\frac{t^{\frac{\alpha+2}{p}-1}}{(1-t)^{\frac{\alpha+2}{p}}}\left(\int_{t^2}^1(1-r)^\alpha r^{p-3-\alpha}\,dr\right)\,dt  \nonumber \\
 \leq & \ B\left(1+\alpha,2+\frac{\alpha}{2}\right) B\left(\frac{\alpha+2}{p},1-\frac{\alpha+2}{p}\right).
\end{align}
\end{itemize}
It is not easy to use condition \eqref{Z-D} to  detect the values $p$ and $\alpha$ for  Karapetrovi\'{c}'s  conjecture.
Proposition   \ref{2main} means that Karapetrovi\'{c}'s  conjecture holds when  $\alpha>0$ and
$$
\frac{9+3\alpha +\sqrt{9\alpha^2+30\alpha+33} }{4}\leq p<2+\frac{3\alpha}{4}+\frac{1}{4}\sqrt{9\alpha^2+40\alpha+48},
$$
which is not clear from  \eqref{Z-D}.

Corresponding to  Remark 4 and Remark 5, we can also understand Proposition  \ref{2main}   by the following figure.

\begin{figure*}[htbp]
\includegraphics[width=12cm,height=7cm]{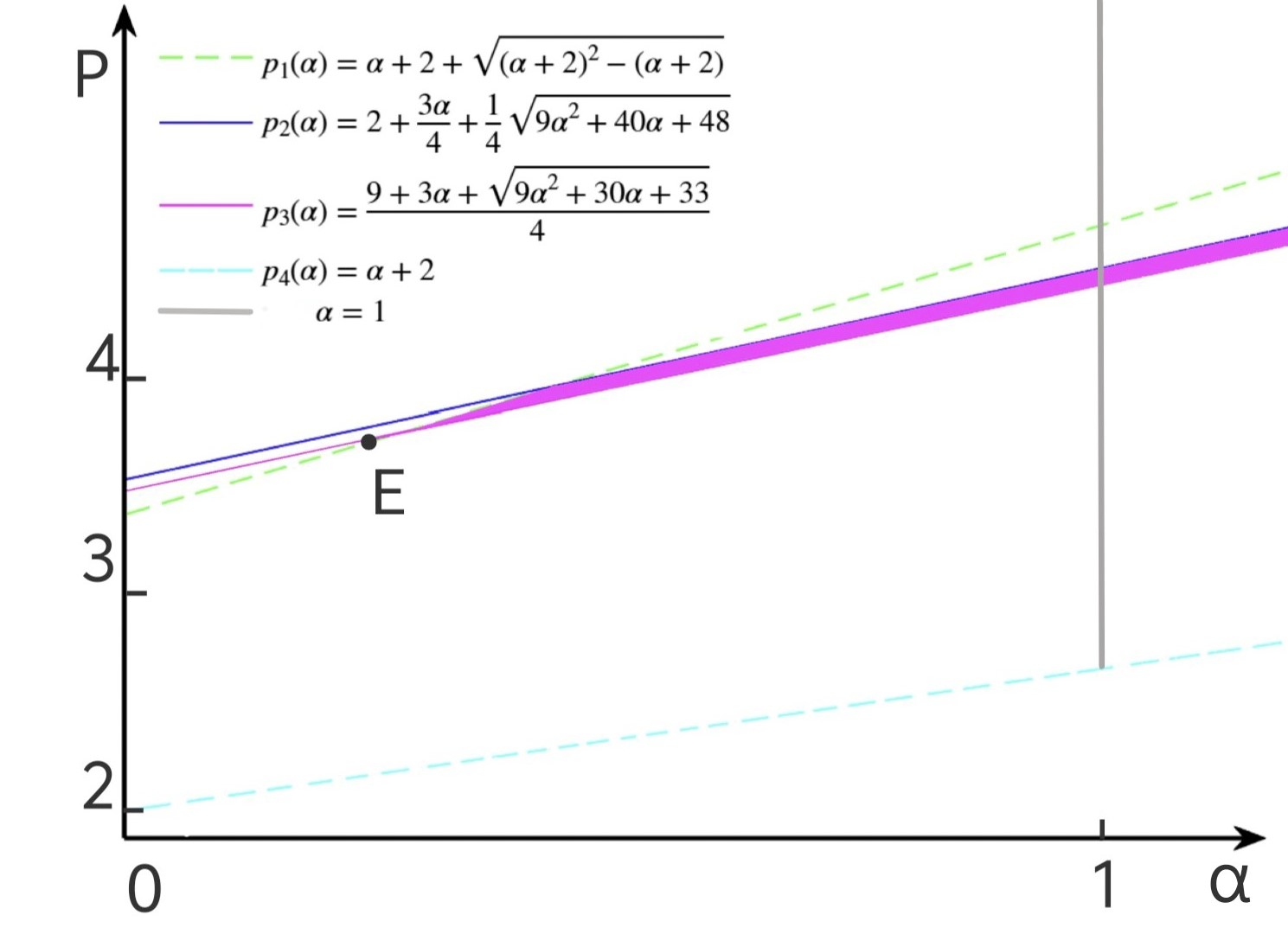}
   \caption{}
\end{figure*}
\noindent Proposition  \ref{2main}  confirms   Karapetrovi\'{c}'s  conjecture   when $(\alpha, p)$ is in  the purple  region in Figure 1, which was  not thoroughly resolved   from previous results.
Note that the point
{\small \begin{align*}
E \left(\frac{-1+\sqrt{13}}{6}, \frac{11+\sqrt{13}}{6}+\sqrt{\left(\frac{11+\sqrt{13}}{6}\right)^2-\frac{11+\sqrt{13}}{6}}\right)
\end{align*}}
is the intersection of curves $p_1(\alpha)$ and $p_3(\alpha)$.
When  $\alpha\geq \frac{-1+\sqrt{13}}{6}$ and $\alpha\not=1$, compared to the previous curves,   the curve $p_3(\alpha)=\frac{9+3\alpha +\sqrt{9\alpha^2+30\alpha+33} }{4}$ derived from  Proposition  \ref{2main}
is the closest to the  straight line $p_4(\alpha)=\alpha+2$.  Thus,  the value $\frac{9+3\alpha +\sqrt{9\alpha^2+30\alpha+33} }{4}$ in Proposition  \ref{2main}  is better than previous results  from the literature when  $\alpha\geq \frac{-1+\sqrt{13}}{6}$ and $\alpha\not=1$.

\section{The main result}

In this section, using  Proposition
\ref{1main}  and  Proposition \ref{2main},  we confirm   Karapetrovi\'{c}'s  conjecture when $p$ and $\alpha$ satisfy  \eqref{Z::main inequ}.  This conclusion is better than Proposition
\ref{1main}  and  Proposition \ref{2main}. In particular, the  conclusion  is  the best so far when $\alpha>\frac{1}{47}$ and $\alpha \not=1$.

   Let  $\alpha \geq 0$ and $p> \alpha+2$. Then   $$p\geq\frac{9+3\alpha +\sqrt{9\alpha^2+30\alpha+33} }{4}$$ in Proposition  \ref{2main} can be rewritten as
$$
\alpha \leq \frac{2p^2-9p+6}{3p-3};
$$
 condition $$\alpha+2+\sqrt{(\alpha+2)^2-\left(\sqrt{2}-\frac{1}{2}\right)(\alpha+2)}\leq p$$ in (iv) of  Section 1 holds if and only if
$$
\alpha \leq  \frac{p^2-4p+2\sqrt{2}-1}{2p+\frac 1 2-\sqrt{2}};
$$
and  condition
$$2+\frac{3\alpha}{4}+\frac{1}{4}\sqrt{9\alpha^2+40\alpha+48}\leq p$$ in (v) or (vi) of  Section 1 is equivalent to
$$\alpha \leq \frac{2p^2-8p+2}{3p-1}. $$

Thus  the previous lower bounds of $p$ can be translated to upper bounds for $\alpha$.  Figure 1 is a $p(\alpha)$-graph.  By these upper bounds  for $\alpha$  and \eqref{ABC:456}, we give an $\alpha(p)$-graph below
from which one can  see how well Proposition  \ref{2main} fills in the gap created by the lower bound in Proposition  \ref{1main}.

\begin{figure*}[htbp]
\includegraphics[width=10cm,height=6cm]{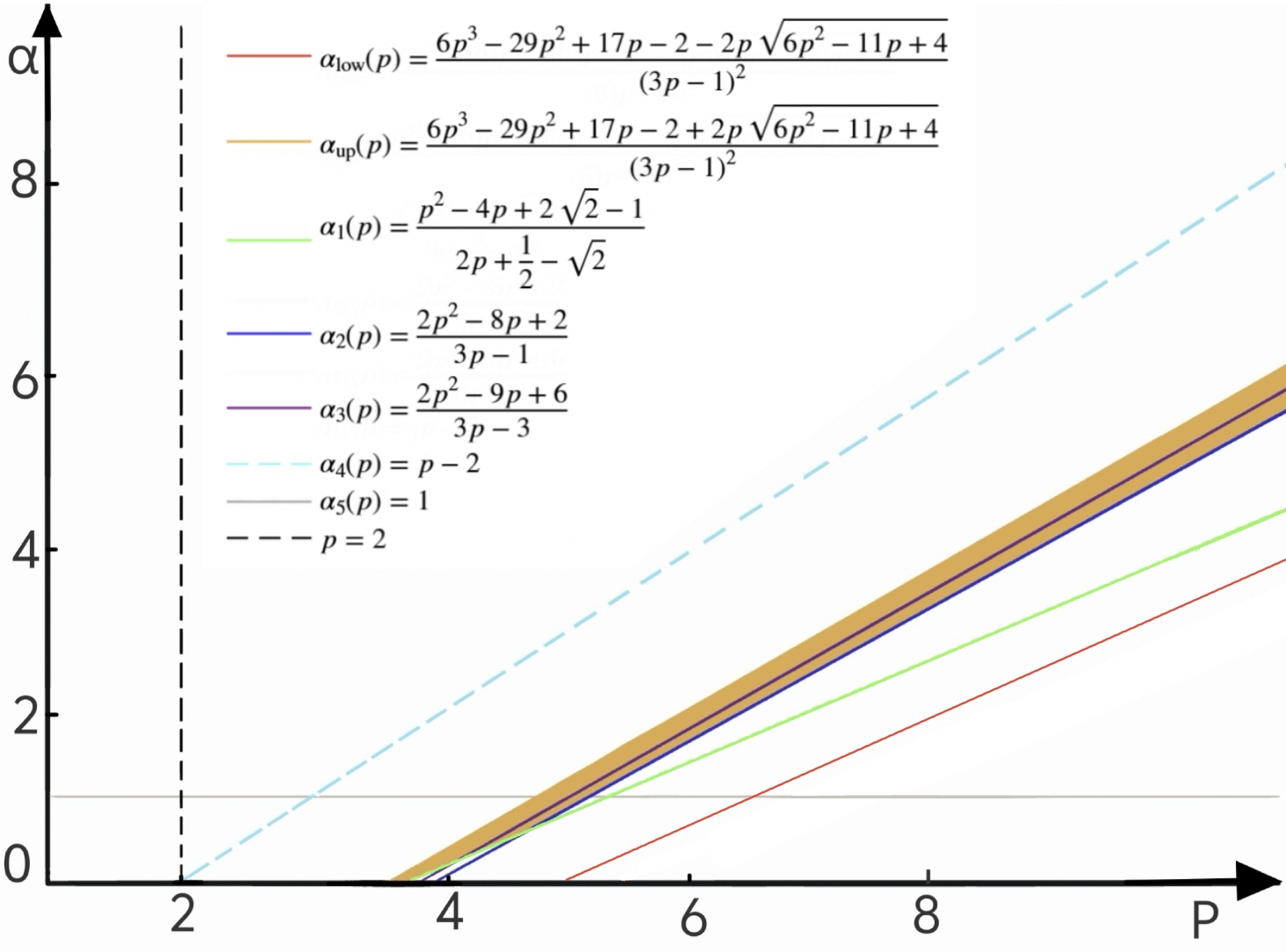}
   \caption{}
\end{figure*}

Proposition  \ref{1main}  proves Karapetrovi\'{c}'s  conjecture   when $(\alpha, p)$ is in  the yellow region in Figure 2, which was  not thoroughly resolved  from previous results.

Note that $\alpha_{up}$ is defined in  Figure 2.  When
$$
\max\left\{\frac{2p^2-9p+6}{3p-3}, \ \frac{p^2-4p+2\sqrt{2}-1}{2p+\frac 1 2-\sqrt{2}}, \ \frac{2p^2-8p+2}{3p-1} \right\} \geq \alpha \geq 0,
$$
a direct computation  yields
{\small \begin{align}\label{ZZZZ:::1}
\alpha_{up}(p) \geq \max\left\{\frac{2p^2-9p+6}{3p-3}, \ \frac{p^2-4p+2\sqrt{2}-1}{2p+\frac 1 2-\sqrt{2}}, \ \frac{2p^2-8p+2}{3p-1} \right\},
\end{align} }
which  is consistent with Figure 2. In fact,
  Figure 2 also gives us the information that
  Karapetrovi\'{c}'s  conjecture holds when  $0\leq \alpha \leq \alpha_{up}(p)$.
 Using Proposition \ref{1main} and Proposition  \ref{2main}, we will  give a proof of this conclusion, which is the main result in this paper.
 From  \eqref{ZZZZ:::1}, since  Karapetrovi\'{c}'s  conjecture holds when $0\leq \alpha\leq \frac{1}{47}$ and $\alpha=1$, we see that
 the following theorem appears to be the best known   so far,  when $\alpha>\frac{1}{47}$ and $\alpha \not=1$.

\begin{theor}\label{corr}
Suppose
\small {
\begin{align*}
0\leq \alpha \leq \frac{6p^3-29p^2+17p-2+2p\sqrt{6p^2-11p+4}}{(3p-1)^2}.
\end{align*} } Then
$$
\|\mathcal{H}\|_{A_\alpha^p}=\frac{\pi}{\sin\frac{(\alpha+2)\pi}{p}}.
$$
\end{theor}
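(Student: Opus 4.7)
The plan is to combine Propositions~\ref{1main} and \ref{2main} by showing that, together, their ranges of validity cover the full region $0\leq\alpha\leq\alpha_{+}(p)$, where
$$\alpha_{\pm}(p)=\frac{6p^3-29p^2+17p-2\pm 2p\sqrt{6p^2-11p+4}}{(3p-1)^2}.$$
Proposition~\ref{1main} already establishes the norm identity on the ``upper band'' $\max\{\alpha_{-}(p),0\}\leq\alpha\leq\alpha_{+}(p)$, so only the ``lower band'' $0\leq\alpha<\alpha_{-}(p)$ remains, and this band is nonempty precisely when $\alpha_{-}(p)>0$.

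To handle the lower band I would appeal to Proposition~\ref{2main}, restated in the form given at the beginning of Section~4: for $\alpha\geq 0$ and $p>\alpha+2$, the hypothesis $p\geq\frac{9+3\alpha+\sqrt{9\alpha^2+30\alpha+33}}{4}$ is equivalent to
$$\alpha\leq\frac{2p^2-9p+6}{3p-3}.$$
Thus the entire proof reduces to verifying the single algebraic inequality
\begin{equation*}
\alpha_{-}(p)\leq\frac{2p^2-9p+6}{3p-3}
\end{equation*}
on the set of $p$ for which $\alpha_{-}(p)>0$ (where in addition $6p^2-11p+4\geq 0$ and $p\neq 1/3$). Once this inequality is known, every pair $(p,\alpha)$ with $0\leq\alpha<\alpha_{-}(p)$ automatically satisfies the hypothesis of Proposition~\ref{2main}, and together with the lower bound from Theorem~\ref{1ower bound} this gives the claimed equality.

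To prove the algebraic inequality I would first isolate the square root, rewriting the desired bound as
$$2p\sqrt{6p^2-11p+4}\ \geq\ 6p^3-29p^2+17p-2-\frac{(2p^2-9p+6)(3p-1)^2}{3p-3}.$$
The cleanest way forward is to identify the $p$-range on which $\alpha_{-}(p)>0$ (this already forces $p$ to be bounded away from small values by Remark~1 and the discriminant condition), check on that range that the right-hand side is nonnegative, and then square both sides. Squaring reduces the claim to a polynomial inequality in $p$ of controlled degree after clearing the denominator $(3p-3)^{2}$; I expect this polynomial to factor in a way that exposes its nonnegativity on the relevant $p$-range.

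The main obstacle will be the sign and range bookkeeping around the squaring step: determining precisely the values of $p$ where $\alpha_{-}(p)$ is positive, ensuring the right-hand side is nonnegative there, and verifying the resulting polynomial inequality factors with all remaining factors of definite sign. Once this calculation is carried out, the theorem follows immediately by the case split: Proposition~\ref{2main} handles $0\leq\alpha\leq\alpha_{-}(p)$ (possibly vacuous), Proposition~\ref{1main} handles $\alpha_{-}(p)\leq\alpha\leq\alpha_{+}(p)$ (taking $\max$ with $0$ when $\alpha_{-}(p)\leq 0$), and Theorem~\ref{1ower bound} supplies the matching lower bound.
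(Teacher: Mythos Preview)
Your proposal is correct and follows essentially the same route as the paper: split into the cases $\alpha_{-}(p)\le 0$ and $\alpha_{-}(p)>0$, use Proposition~\ref{1main} on the upper band, and in the second case cover $0\le\alpha\le\alpha_{-}(p)$ via Proposition~\ref{2main} through the key inequality $\alpha_{-}(p)<\frac{2p^2-9p+6}{3p-3}$. Two minor remarks: the paper simply asserts this inequality for all $p>2$ (which is automatic here since $\alpha\ge 0$ and Remark~1 give $p>\alpha+2\ge 2$), so your squaring argument need only be carried out on $p>2$; and you do not need to invoke Theorem~\ref{1ower bound} separately, since both Propositions~\ref{1main} and~\ref{2main} already state the full equality.
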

\begin{proof}
By Remark 1,  we get $p>\alpha+2$, which yields the boundedness of $\H$ on  $A_\alpha^p$.
Recall that
$$\alpha_{low}(p)=\frac{6p^3-29p^2+17p-2-2p\sqrt{6p^2-11p+4}}{(3p-1)^2}.$$
When $p>2$, a calculation  gives
\begin{align}\label{TL::}
\frac{2p^2-9p+6}{3p-3} > \alpha_{low}(p).
\end{align}
From Proposition   \ref{1main}, the desired result holds if
$$\max\{\alpha_{low}(p), 0\} \leq \alpha \leq \alpha_{up}(p).$$
Then it suffices to prove the conclusion when $\alpha_{low}(p)>0$.  For such case, \eqref{TL::} gives $\frac{2p^2-9p+6}{3p-3} >0$.
By Proposition   \ref{2main}, Karapetrovi\'{c}'s  conjecture holds if
$
0 \leq \alpha \leq \frac{2p^2-9p+6}{3p-3}.
$
Combining this with \eqref{TL::}, we get that Karapetrovi\'{c}'s  conjecture holds when $\alpha_{low}(p)>0$ and
$ 0\leq \alpha \leq \alpha_{low}(p)$. Also, by Proposition   \ref{1main}, Karapetrovi\'{c}'s  conjecture holds if  $\alpha_{low}(p)>0$ and
$\alpha_{low}(p)\leq \alpha \leq \alpha_{up}(p)$. Thus, we get the desired  conclusion when $\alpha_{low}(p)>0$. The proof is complete.
\end{proof}

\noindent{\text{\bf Acknowledgements.}}
 The work was supported  by National Natural Science Foundation of China (No. 12271328).

\end{document}